\newcommand{\R}{{I\!\!R}}
\newcommand{\be}{\begin{equation}}
\newcommand{\ee}{\end{equation}}
\newcommand{\ba}{\begin{eqnarray}}
\newcommand{\ea}{\end{eqnarray}}
\newcommand{\la}{\label}
\def\R{{\rm I}\! {\rm R}}
\def\ss{ss}
\def\X{{\bf X}}
\def\Y{{\bf Y}}
\def\D{{\cal D}}
\begin{document}

\pagestyle{headings}

\title{Iterative operator-splitting methods for unbounded operators: Error analysis and examples}
\author{J\"urgen Geiser}
\institute{
\email{geiser@mathematik.hu-berlin.de}}
\maketitle

\begin{abstract}
In this paper we describe an iterative
operator-splitting method for unbounded operators.
We derive error bounds for iterative splitting methods
in the presence of unbounded operators and semigroup operators.
Here mixed applications of hyperbolic and parabolic type are
allowed and discussed in the applications.
Mixed experiments are applied to  ordinary differential equations 
and evolutionary Schr\"odinger equations.

\end{abstract}

{\bf Keywords} Iterative operator-splitting method, Schr\"odinger equation,
error bounds. \\

{\bf AMS subject classifications.} 65M15, 65L05, 65M71.

\section{Introduction}

In this paper we concentrate on approximation
to the solution of the linear evolution equation
\begin{eqnarray}
\label{equ1}
&&  \partial_t \; u = Lu = (A + B) u, \; u(0) = u_0 ,
\end{eqnarray}
where $L, A$ and $B$ are unbounded operators.

As numerical method we will apply a two-stage iterative splitting scheme:
\begin{eqnarray}
\label{equ1}
&&   u_i(t) = \exp(A t) u_0 + \int_0^t \exp(As) B u_{i-1} \; ds , \\
&&   u_{i+1}(t) = \exp(B t) u_0 + \int_0^t \exp(Bs) A u_{i} \; ds ,
\end{eqnarray}
where $i = 1, 3, 5, \ldots$ and $u_{0}(t) = 0$.

The outline of the paper is as follows.
The operator-splitting methods are introduced in Section \ref{oper}
and the error analysis of the operator-splitting methods are
presented.
In Section \ref{semi}, we discuss the semigroup theory, underlying 
the theoretical method.
 In Section \ref{error}, we discuss the error
analysis of the iterative methods.
In Section \ref{comp}, we discuss an efficient 
computation of the iterative splitting method with $\phi$-functions.
In Section \ref{appl} we introduce the application of our
methods to existing software tools.
Finally we discuss future works in the area of iterative
splitting methods.

\section{Iterative splitting method}
\label{oper}

The following algorithm is based on the iteration with 
fixed-splitting discretization step-size $\tau$, namely, on the 
time-interval $[t^n,t^{n+1}]$ we solve the following sub-problems
consecutively  for $i=0,2, \dots 2m$. (cf. \cite{glow03,kan03}.):

\begin{eqnarray}
 && \frac{\partial c_i(t)}{\partial t} = A
c_i(t) \; + \; B c_{i-1}(t), \;
\mbox{with} \; \; c_i(t^n) = c^{n} \label{kap3_iter_1} \\
&& \mbox{and} \; c_{0}(t^n) = c^n \; , \; c_{-1} = 0.0 , \nonumber
\\\label{kap3_iter_2}
&& \frac{\partial c_{i+1}(t)}{\partial t} = A c_i(t) \; + \; B c_{i+1}(t), \; \\
&& \mbox{with} \; \; c_{i+1}(t^n) = c^{n}\; , \nonumber
\end{eqnarray}
 where $c^n$ is the known split
approximation at the  time-level  $t=t^{n}$. The split
approximation at the time-level $t=t^{n+1}$ is defined as
$c^{n+1}=c_{2m+1}(t^{n+1})$. (Clearly, the function $c_{i+1}(t)$
depends on the interval $[t^n,t^{n+1}]$, too, but, for the sake of
simplicity, in our notation we omit the dependence on $n$.)

\smallskip
In the following we will analyze the convergence and the rate of
convergence  of the method
(\ref{kap3_iter_1})--(\ref{kap3_iter_2}) for $m$ tends to
infinity for the linear operators 
$A,B: \!  {\X} \rightarrow {\X}$, 
where we assume that these operators and their sum  are
generators of the $C_0$ semigroups. We emphasize that these
operators are not necessarily bounded, so the convergence is
examined in a general Banach space setting.

\section{Semi group theory}
\label{semi}

In the theoretical part, we deal with systems of operators.
Therefore here we have to prove that 
their operators are generators of  $C_0$-semigroup.

This is not trivial and in the following we give the
detail to verify that the generators based on a  graph norm are 
bounded operators (see ideas in \cite{hieb92}).

\subsection{$2 \times 2$ Systems}

We deal with two iterative steps and obtain
 operators of $2 \times 2$ matrices.

Let us assume $\X$ to be a Banach space and let $A$, $B$ be 
generators of a $C_0$-semigroups in $\X$, so we define the system:
\begin{eqnarray}
{\Y_A} = \left( \begin{array}{c}
{\X_A} \\
{\X}
\end{array}
\right) , 
\end{eqnarray}
\begin{eqnarray}
C :=
 \left( \begin{array}{c c}
A & 0  \\
A & B 
\end{array}
\right): {\Y_{A}}  \rightarrow {\Y_{A}} , 
\end{eqnarray}
where $C$ is a $2 \times 2$ matrix operator.

We assume that ${\X}_A$ is the domain of $A$ with the
graph norm $||\cdot||_A$:
\begin{eqnarray}
||g||_A := ||A g|| + ||g||,
\end{eqnarray}
where $g \in dom(A)$.

\begin{theorem}
Assuming $A$ and $B$ are generators of $C_0$ semigroup in
${\X_A}$ and ${\X}$,
we have a closed operator $C$ as a generator of a $C_0$-semigroup
in ${\Y_A}$
\end{theorem}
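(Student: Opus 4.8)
The plan is to verify the hypotheses of the Hille--Yosida (or rather the Phillips perturbation / Lumer--Phillips) framework for the triangular operator matrix $C$ acting on $\Y_A$. First I would observe that $C$ decomposes as
\begin{eqnarray}
C = \left( \begin{array}{c c} A & 0 \\ 0 & B \end{array} \right) + \left( \begin{array}{c c} 0 & 0 \\ A & 0 \end{array} \right) =: C_0 + P ,
\end{eqnarray}
where $C_0$ is the diagonal generator. On the product space $\X_A \times \X$, the diagonal operator $C_0$ generates the $C_0$-semigroup $\mathrm{diag}(\exp(At),\exp(Bt))$, provided one checks that the $A$-part generates a $C_0$-semigroup \emph{on $\X_A$} with its graph norm; this is the standard fact that $(\X_A, ||\cdot||_A)$ is a Banach space on which the restriction of $\exp(At)$ is again a $C_0$-semigroup (because $A$ commutes with its own resolvent, so the graph norm is invariant up to equivalence). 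The domain of $C_0$ is $D(A^2) \times D(B)$ inside $\Y_A$, interpreting the first component with the graph norm.

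Next I would handle the off-diagonal perturbation $P$, whose only nonzero entry is the map $A : \X_A \to \X$. The key point is that, by the very definition of the graph norm, $A$ is a \emph{bounded} operator from $\X_A$ into $\X$: $||Ag|| \le ||g||_A$. Hence $P$ is a bounded operator on $\Y_A$ (it maps $(g,h) \mapsto (0, Ag)$ with $||Ag|| \le ||(g,h)||_{\Y_A}$). Now I would invoke the bounded perturbation theorem for $C_0$-semigroups: if $C_0$ generates a $C_0$-semigroup and $P$ is bounded, then $C_0 + P = C$ generates a $C_0$-semigroup on $\Y_A$, with domain $D(C) = D(C_0)$ and with the perturbed semigroup given by the Dyson--Phillips series. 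Closedness of $C$ is then automatic, since a generator of a $C_0$-semigroup is always closed; alternatively $C$ is closed as the sum of a closed operator $C_0$ and a bounded operator $P$.

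The main obstacle — and the step that requires genuine care rather than quotation of a theorem — is the first one: showing that $A$, viewed on the smaller space $\X_A = (D(A), ||\cdot||_A)$, still generates a $C_0$-semigroup there, i.e. that the restriction of $\exp(At)$ to $D(A)$ is strongly continuous in the graph norm and has the right growth bound. This is where the cited ideas of \cite{hieb92} come in: one shows $\exp(At)$ leaves $D(A)$ invariant, commutes with $A$ on $D(A)$, so that $||\exp(At) g||_A = ||A\exp(At)g|| + ||\exp(At)g|| = ||\exp(At)Ag|| + ||\exp(At)g|| \le M e^{\omega t} ||g||_A$, and strong continuity in $||\cdot||_A$ follows from strong continuity of $\exp(At)$ applied to both $g$ and $Ag$. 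Once this is in place the rest is the routine perturbation argument sketched above, and the analogous reasoning extends to the larger $n \times n$ systems needed later.
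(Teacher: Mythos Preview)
Your argument is correct, but it takes a genuinely different route from the paper's. The paper does not invoke the bounded perturbation theorem; instead it writes down the candidate semigroup explicitly,
\begin{eqnarray}
T(t) = \left( \begin{array}{c c}
\exp(A t) & 0 \\
\int_0^t \exp(B r) A \exp(A (t-r))\, dr & \exp(B t)
\end{array} \right),
\end{eqnarray}
and verifies by direct differentiation that $\frac{d}{dt}T(t)f = T(t)Cf$ for $f$ in the domain, together with the algebraic semigroup law $T(t)T(s)=T(t+s)$. The boundedness of the off-diagonal block $R(t)=\int_0^t \exp(Br)A\exp(A(t-r))\,dr$ on $\X_A$ is exactly the step where the graph norm enters in the paper's argument.

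Your perturbation approach is cleaner and more modular: once $P$ is seen to be bounded on $\Y_A$, everything follows from a single quotation. In fact, because $P$ is strictly lower triangular one has $P^2=0$, so the Dyson--Phillips series you allude to terminates after one term and reproduces precisely the paper's explicit $T(t)$; thus the two proofs are secretly the same object viewed from opposite ends. The advantage of the paper's direct construction is that the integral formula for $T(t)$ is exactly the expression that reappears in the iterative splitting scheme analysed later, so having it in hand from the start is convenient. One minor remark: the point you flag as the ``main obstacle'' --- that $A$ generates a $C_0$-semigroup on $(\X_A,||\cdot||_A)$ --- is actually part of the \emph{hypothesis} of the theorem as stated, so your supplementary verification, while correct and useful, is not strictly required here.
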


\begin{proof}

We solve the Cauchy problem:
\begin{eqnarray}
\frac{d}{dt} T(t) f = T(t) C f , \; T(0) f = f, \; f \in dom(C) .
\end{eqnarray}
and we find
\begin{eqnarray}
T(t) f = \left( \begin{array}{c c}
\exp(A t) & 0 \\
\int_0^t \exp(B r) A \exp(A (t-r)) dr & \exp(B t)
\end{array}
\right)
 \left( \begin{array}{c}
 g \\
 h
\end{array}
\right) , 
f :=  \left( \begin{array}{c}
 g \\
 h
\end{array}
\right) , 
\end{eqnarray}
for 
\begin{eqnarray}
f =  \left( \begin{array}{c}
 g \\
 h
\end{array}
\right) \in {\D} :=  \left( \begin{array}{c}
 dom(A) \\
 {\X}
\end{array}
\right) , 
\end{eqnarray}

And we have:
\begin{eqnarray}
\frac{d}{dt} T(t) f & = & \frac{d}{dt} \left( \begin{array}{c}
\exp(A t) \; g  \\
\int_0^t \exp(B r) A \exp(A (t-r)) dr \; g  + \exp(B t) \; h
\end{array}
\right) \\
& = &
 \left( \begin{array}{c}
\exp(A t) A  \; g  \\
\exp(B t) A  \; g + \int_0^t \exp(B r) A^2 \exp(A (t-r)) dr  \; g \\
 + \exp(B t) B \; h
\end{array}
\right)\\
&= &
 \left( \begin{array}{c c}
\exp(A t)  \\
\int_0^t \exp(B r) A \exp(A (t-r)) dr & \exp(B t) 
\end{array}
\right)
 \left( \begin{array}{c c}
A  & 0   \\
A  & B
\end{array}
\right)
\left( \begin{array}{c}
 g \\
 h
\end{array}
\right) , 
\end{eqnarray}
for $g \in dom(A^2)$ and $h \in dom(B)$.

The same can be shown for $T(t) T(s) f = T(t + s) f, f \in {\D}$.

Therefore the family $\{T(t)\}_{t \le 0}$ is a $C_0$ semigroup in ${\Y}_A$,
while $R(t) = \int_0^t \exp(B r) A \exp(A (t-r)) dr$ is defined on $dom(A)$
and can be bounded in ${\X}_A$ for each $t > 0$.

\end{proof}

\begin{remark}
We cannot weaken the assumptions to a the closed operator $C$ as generator of 
a semigroup $\Y =  \left( \begin{array}{c}
{\X} \\
{\X}
\end{array}
\right)$.
This is obvious, while if we set $A = B$ we find
\begin{eqnarray}
T(t) = \left( \begin{array}{c c}
\exp(A t) & 0 \\
t A \exp(At) & \exp(A t)
\end{array}
\right)
\end{eqnarray}
and if $\X$ is a Hilbert space and $i A$ is selfadjoint,
then $R(t) = t A \exp(At)$ cannot be extended to a bounded operator
in $\X$ for each $t > 0$, unless we restrict it to ${\X}_A$ with the
graph norm $|| \cdot ||_A = ||A \cdot || + || \cdot||$.

\end{remark}

\subsection{$N \times N$ Systems (Generalization)}

Here, we deal with $n$ iterative steps and obtain operators 
of $n \times n$ matrices.

Let $\X$ be a Banach space and let $A$, $B$ be generators
of a $C_0$-semigroups in $\X$, so we define the system:
\begin{eqnarray}
{\Y_{ABA\ldots A}} = \left( \begin{array}{c}
{\X_{ABA \ldots A}} \\
{\X_{ABA \ldots B}} \\
\vdots \\
{\X_{A}} \\
{\X}
\end{array}
\right)
\end{eqnarray}
and
\begin{eqnarray}
C :=
 \left( \begin{array}{c c c c}
A & 0 & \ldots & \ldots   \\
A & B & 0 & \ldots  \\
0 & B & A & \ldots \\
\vdots & \ddots & \ddots &\ddots \\
0 & \ldots  & A & B
\end{array}
\right): {\Y_{ABA\ldots A}}  \rightarrow {\Y_{ABA\ldots A}}  
\end{eqnarray}

We assume that ${\X}_{ABA \cdots A}$ is the domain of $A$ with the
graph norm $||\cdot||_{ABA \cdots A}$:
\begin{eqnarray}
||g||_{ABA \cdots A} := ||ABA \cdots A g|| + \ldots + ||A g|| + || g ||,
\end{eqnarray}
where $g \in dom(A)$ and we assume .

\begin{remark}
The proof can be done as for a $2 \times 2$ operators, and we use recursive
results.
\end{remark}

\section{Error analysis}
\label{error}

We present the results of the consistency of our 
iterative method. We assume for the system of operator the 
generator of a $C_0$ semigroup based on their underlying
graph norms (see the previous Section \ref{semi}).

\smallskip
\begin{theorem}
Let us consider the abstract Cauchy problem in a Hilbert space \X
\begin{equation}
\begin{array}{c}
{\displaystyle \partial_t c(x, t) = A c(x, t) + B c(x, t), \quad 0 < t
\leq T } \mbox{and} \; x \in \Omega\\
\noalign{\vskip 1ex} {\displaystyle c(x,0)=c_0(x) }  \; x \in \Omega\\
\noalign{\vskip 1ex} {\displaystyle c(x,t)=c_1(x, t) }  \; x \in \partial \Omega \times [0, T], \\

\end{array} \label{eq:ACP}
\end{equation}

\noindent where  $A,B: \!  D({\X}) \rightarrow {\X} $ are given
linear  operators which are generators of the $C_0$-semigroup and $c_0
\in \X$ is a given element. 
We assume $A$, $B$ are unbounded.
Further, we assume the estimations of the unbounded operator $B$ with 
sufficient smooth initial conditions (see \cite{han08}):
\begin{eqnarray}
&& || B  \exp((A+B) \tau) u_0 || \le \kappa ,
\end{eqnarray}

Further we assume the estimation of the partial 
integration of the unbounded operator $B$ (see \cite{han08}):
\begin{eqnarray}
&& || B \int_0^{\tau} \exp(B s) s ds || \le \tau C , 
\end{eqnarray}

Then, we can bound our iterative operator splitting method as :
\begin{eqnarray}
&& || (S_i -  \exp((A+B) \tau)|| \le  C \tau^i, 
\end{eqnarray}
where $S_i$ is the approximated solution for the i-th iterative step
and $C$ is a constant that can be chosen uniformly on bounded time
intervals.

\end{theorem}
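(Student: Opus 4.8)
The plan is to establish the error bound by induction on the iteration index $i$, exploiting the recursive structure of the iterative scheme. First I would write the exact solution $c(\tau) = \exp((A+B)\tau) c_0$ and note that, by the variation-of-constants (Duhamel) formula, it satisfies
\begin{eqnarray}
\exp((A+B)\tau) c_0 = \exp(A\tau) c_0 + \int_0^\tau \exp(A(\tau-s)) B \exp((A+B)s) c_0 \, ds.
\end{eqnarray}
Comparing this with the first half-step $c_i(\tau) = \exp(A\tau) c_0 + \int_0^\tau \exp(A(\tau-s)) B c_{i-1}(s)\, ds$, the local error $e_i := c_i - c$ satisfies the homogeneous-looking recursion $e_i(\tau) = \int_0^\tau \exp(A(\tau-s)) B e_{i-1}(s)\, ds$, and similarly for the second half-step with $A$ and $B$ interchanged. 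The base case uses $c_{-1}=0$, so $e_0(\tau) = -\exp((A+B)\tau) c_0$ and $B e_0$ is controlled by the first hypothesis $\|B\exp((A+B)\tau) u_0\| \le \kappa$.

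The key step is then to propagate the bound through one iteration: assuming $\|e_{i-1}(s)\| \le C s^{i-1}$ (or more precisely that $\|B e_{i-1}(s)\|$ is suitably bounded), I would estimate
\begin{eqnarray}
\|e_i(\tau)\| \le \int_0^\tau \|\exp(A(\tau-s))\| \, \|B e_{i-1}(s)\| \, ds \le M \int_0^\tau C' s^{i-1} ds = \frac{M C'}{i} \tau^i,
\end{eqnarray}
using that $\exp(A t)$ is a $C_0$-semigroup and hence uniformly bounded by some $M = M(\tau)$ on $[0,\tau]$. The delicate point is handling the unbounded operator $B$ applied to $e_{i-1}$: here one cannot simply pull $B$ through the integral naively, but must use the commutation/smoothing identity $B\int_0^\tau \exp(Bs)\, (\cdot)\, ds$ together with the second hypothesis $\|B\int_0^\tau \exp(Bs) s\, ds\| \le \tau C$, which is precisely what makes the partial integration legitimate and keeps the $B$-norm of the iterate finite. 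This is where the graph-norm framework of Section \ref{semi} enters: the iterates live in the domain $\X_A$ (resp.\ $\X_{ABA\cdots}$) where the relevant compositions of $A$ and $B$ are bounded, so the formal manipulations are justified.

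I expect the main obstacle to be the bookkeeping of \emph{which} operator hits \emph{which} iterate and in which graph-norm, since each iteration introduces one more factor of $A$ or $B$ that must be absorbed by the domain hypothesis; a clean induction hypothesis must therefore track not just $\|e_i\|$ but a weighted norm like $\|B e_i\|$ or $\|A e_i\|$ as appropriate for the next half-step. Once the per-step estimate is in place, I would compose the two half-steps (odd and even $i$), observe that each full pass gains one power of $\tau$, and conclude $\|S_i - \exp((A+B)\tau)\| \le C\tau^i$ with $C$ depending on $\kappa$, on the constant in the partial-integration hypothesis, and on the semigroup bound $M$, all of which are uniform on bounded time intervals. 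A final remark would address passing from the local (one-step) estimate to a global one via the usual Lady Windermere's fan argument, stability of the semigroups giving the uniform-in-$n$ accumulation of local errors.
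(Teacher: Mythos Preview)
Your proposal is correct and follows essentially the same route as the paper: both use the variation-of-constants (Duhamel) representation of the exact solution and of each iterate, and both obtain one extra power of $\tau$ per iteration from the nested integral. The only cosmetic difference is that you package the argument as an induction on the error recursion $e_i(\tau)=\int_0^\tau \exp(A(\tau-s))\,B\,e_{i-1}(s)\,ds$, whereas the paper unrolls this recursion explicitly, writing $c_i(\tau)$ and $c(\tau)$ as matching sums of $i$-fold iterated integrals that differ only in the innermost factor (the paper replaces the final $\exp((A+B)(\cdot))$ by a single-operator exponential), and then reads off $\|e_i\|\le C\tau^i$ from the volume of the simplex; your recursion, once iterated, reproduces exactly that expansion.
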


\begin{proof}
Let us consider the  iteration
(\ref{kap3_iter_1})--(\ref{kap3_iter_2}) on the  sub-interval
$[t^n,t^{n+1}]$.

For the first iterations we have:
\begin{equation}
\begin{array}{c}
 \partial_t c_1(t) = A c_1(t) , \quad t \in (t^n,t^{n+1}], 
\end{array} \label{eq:err1}
\end{equation}
and for the second iteration we have:
\begin{equation}
\begin{array}{c}
 \partial_t c_2(t) = A c_1(t)  + B c_2(t) , \quad t \in (t^n,t^{n+1}], \\
\end{array} \label{eq:err1}
\end{equation}

In general, we have:

for the odd iterations: $i = 2m+1$
for $m= 0,1, 2, \ldots$
\begin{equation}
\begin{array}{c}
 \partial_t c_i(t) = A c_i(t) + B c_{i-1}(t) , \quad t \in (t^n,t^{n+1}], 
\end{array} \label{eq:err1}
\end{equation}
where for $c_0(t) \equiv 0$.

for the even iterations: $i = 2m$
for $m= 1, 2, \ldots$
\begin{equation}
\begin{array}{c}
 \partial_t c_{i}(t) = A c_{i-1}(t)  + B c_{i}(t) , \quad t \in (t^n,t^{n+1}] . \\
\end{array} \label{eq:err1}
\end{equation}

We have the following solutions for the iterative scheme:

the solutions for the first two equations are given by the
variation of constants:
\begin{equation}
\begin{array}{c}
 c_1(t) = \exp(A (t^{n+1} - t )) c(t^n) , \quad t \in (t^n,t^{n+1}], \\
\end{array} \label{eq:err1}
\end{equation}
\begin{equation}
\begin{array}{c}
 c_2(t) = \exp(B t) c(t^n)  + \int_{t^n}^{t^{n+1}} \exp(B (t^{n+1} - s)) A c_1(s) ds , \quad t \in (t^n,t^{n+1}] .
\end{array} \label{eq:err1}
\end{equation}

For the recurrence relations with even and odd iterations,
we have the solutions:

for the odd iterations: $i = 2m+1$,

for $m= 0, 1, 2, \ldots$
\begin{equation}
\label{odd_1}
\begin{array}{c}
 c_{i}(t) = \exp(A (t - t^n )) c(t^n) + \int_{t^n}^{t} \exp(s A) B c_{i-1}(t^{n+1} - s) \; ds , \quad t \in (t^n,t^{n+1}] .
\end{array}
\end{equation}

For the even iterations: $i = 2m$,

for $m= 1, 2, \ldots$
\begin{equation}
\label{even_1}
\begin{array}{c}
 c_{i}(t) = \exp(B (t - t^n )) c(t^n) + \int_{t^n}^{t} \exp(s B) A c_{i-1}(t^{n+1} - s) \; ds , \quad t \in (t^n,t^{n+1}] .
\end{array}
\end{equation}

{\bf The consistency is given as:}

For $e_1$ we have:
\begin{eqnarray}
&& c_1(\tau) = \exp(A) \tau) c(t^n) ,
\end{eqnarray}
\begin{eqnarray}
&& c(\tau) = \exp((A+B) \tau) c(t^n) = \exp(A \tau) c(t^n) \\
&& + \int_{t^n}^{t^{n+1}}  \exp(A s) B \exp((t^{n+1} - s) (A + B)) c(t^n) \; ds . \nonumber
\end{eqnarray}

We obtain:
\begin{eqnarray}
&& || e_1 || = || c - c_1 || \le || \exp((A+B) \tau) c(t^n) - \exp(A \tau) c(t^n)|| \\
&& \le C_1 \tau c(t^n) . \nonumber
\end{eqnarray}

For $e_2$ we have:
\begin{eqnarray}
&& c_2(\tau) = \exp(B) \tau) c(t^n) \nonumber \\
&& + \int_{t^n}^{t^{n+1}}  \exp(B s) A \exp((t^{n+1} - s) A) c(t^n) \; ds ,
\end{eqnarray}
\begin{eqnarray}
&& c(\tau) = \exp(B \tau) c(t^n) + \int_{t^n}^{t^{n+1}}  \exp(B s) A \exp((t^{n+1} - s) A) c(t^n) \; ds  \nonumber \\
&& + \int_{t^n}^{t^{n+1}}  \exp(B s) A \\
&& \int_{t^n}^{t^{n+1} - s} \exp(A \rho) B \exp((t^{n+1} - s - \rho) (A + B)) c(t^n) \; d \rho \; ds . \nonumber 
\end{eqnarray}

We obtain:
\begin{eqnarray}
&& || e_2 || \le || \exp((A+B) \tau) c(t^n) - c_2 || \\
&& \le C_2 \tau^{2} c(t^n) . \nonumber 
\end{eqnarray}

For odd and even iterations, the recursive proof is given in the following:

for the odd iterations: $i = 2m+1$

for $m= 0, 1, 2, \ldots$,

for $e_i$ we have :
\begin{eqnarray}
&& c_i(\tau) = \exp(A) \tau) c(t^n)  \\
&& + \int_{t^n}^{t^{n+1}}  \exp(A s) B \exp((t^{n+1} - s) B) c(t^n) \; ds \nonumber\\
&& + \int_{t^n}^{t^{n+1}}  \exp(A s_1) B \int_{t^n}^{t^{n+1} - s_1} \exp(s_2 B) A \exp((\tau - s_1 - s_2) A)  c(t^n) \; ds_2 \; ds_1 \nonumber \\
&& + \ldots + \nonumber \\
&& + \int_{t^n}^{t^{n+1}}  \exp(A s_1) B \int_{t^n}^{t^{n+1} - s_1} \exp(s_2 B) A \exp((\tau - s_1 - s_2) A)  uc(t^n) \; ds_2 \; ds_1 + \ldots + \nonumber \\
&&  + \int_{t^n}^{t^{n+1}}  \exp(A s_1) B \int_{t^n}^{t^{n+1} - \sum_{j=1}^{i-1} s_1} \exp(s_2 B) A \exp((\tau - s_1 - s_2) A)  c(t^n) \; ds_2 \; ds_1 \ldots ds_{i} \nonumber ,
\end{eqnarray}
\begin{eqnarray}
&& c(\tau) = \exp(B \tau) + \int_{t^n}^{t^{n+1}}  \exp(B s) A \exp((t^{n+1} - s) A) c(t^n) \; ds  \\
&& + \ldots + \nonumber \\
&& + \int_{t^n}^{t^{n+1}}  \exp(A s_1) B \int_{t^n}^{t^{n+1} - s_1} \exp(s_2 B) A \exp((\tau - s_1 - s_2) A)  c(t^n) \; ds_2 \; ds_1 + \ldots + \nonumber \\
&&  + \int_{t^n}^{t^{n+1}}  \exp(A s_1) B \int_{t^n}^{t^{n+1} - \sum_{j=1}^{i-1} s_1} \exp(s_2 B) A \exp((\tau - s_1 - s_2) A)  c(t^n) \; ds_2 \; ds_1 \ldots \nonumber \\
&& \int_{t^n}^{t^{n+1} - \sum_{j=1}^{i} s_2} \exp(s_2 B) A \exp((\tau - s_1 - s_2) (A+B))  c(t^n) ds_{i} \nonumber .
\end{eqnarray}

We obtain:
\begin{eqnarray}
&& || e_i || \le || \exp((A+B) \tau) c(t^n) - c_i || \\
&& \le C \tau^{i} c(t^n) \nonumber ,
\end{eqnarray}
where $\alpha = \min_{j=1}^{i}\{ \alpha_1\}$
and $0\le \alpha_i < 1$. \\

The same idea can be applied to the 
even iterative scheme.

\end{proof}

\begin{remark}
The same idea can be applied to 
 $A = \nabla D \nabla$  $B = -{\bf v} \cdot \nabla$,
so that one operator is less unbounded but we reduce the convergence order:
\begin{eqnarray}
 && ||e_1|| = K ||B|| \tau^{\alpha_1} ||e_{0}|| + \mathcal{O}(\tau^{1 + \alpha_1}) \\
&& \mbox{and hence} \nonumber \\
 && ||e_{2}|| = K ||B|| ||e_0|| \tau^{1+\alpha_1+\alpha_2} + \mathcal{O}(\tau^{1+\alpha_1+\alpha}) \label{gleich_kap33b} ,
\end{eqnarray}
where $0 \le \alpha_1, \alpha_2 < 1$.
\end{remark}

\begin{remark}
If we assume the consistency of $\mathcal{O}(\tau^m)$ for the 
initial value $e_1(t^n)$ and $e_2(t^n)$, we can redo the proof and obtain
at least a global error of the splitting methods of  $\mathcal{O}(\tau^{m-1})$.

\end{remark}

In the next section we describe the computation of the integral formulation 
with $\exp$-functions.

\section{Computation of the iterative splitting method}
\label{comp}

In the last few years, the computational effort to compute integral with
$\exp$-function has increased because of the $\phi$-function,
which reduces the integration to a product of $\exp$-functions,
see \cite{han08}. The ideas are also used for 
exponential Runge-Kutta methods, see \cite{hoch05}.

As regards computations of the matrix exponential an
overview is given in \cite{najfeld95}.

For linear operators $A, B: {\D}(F) \subset X \rightarrow X$ generating
a $C_0$ semigroup and a scalar $t \in \R$, we define 
the operator $a = t A$ and $b = t B$, and the bounded 
operators $\phi_{0,A}= \exp(a)$, $\phi_{0,B}= \exp(b)$
and:
\begin{eqnarray}
\phi_{k,A} & = & \int_0^1 \exp((1 - s) \tau A) \frac{s^{k-1}}{(k-1)!} d s) , \\
\phi_{k,B} & = & \int_0^1 \exp((1 - s) \tau B) \frac{s^{k-1}}{(k-1)!} d s) ,
\end{eqnarray}
for $k \ge 1$.

From this definition it is a straightforward matter to prove the recurrence
relation:
\begin{eqnarray}
\label{rec_1}
\phi_{k,A} = \frac{1}{k!} I + \tau A \phi_{k+1}, \\
\label{rec_2}
\phi_{k,B} = \frac{1}{k!} I + \tau B \phi_{k+1} .
\end{eqnarray}

We apply equations (\ref{rec_1}) and (\ref{rec_2}) to
our iterative schemes (\ref{odd_1}) and (\ref{even_1})
and obtain:
\begin{eqnarray}
&& c_1(\tau) = \exp(A \tau) c(t^n)  = \phi_{0,A} c(t^n) , \\
&& c_2(\tau) =  \phi_{0,A} c(t^n) + \sum_{k=1}^{\infty}  B^k A \phi_{k,A}  ,
\end{eqnarray}
where we assume that $B$ is bounded and $\exp{B} = \sum_{k=0}^\infty \frac{1}{k} B^k$.

For an unbounded operator $B$ we can apply the convolution of integrals,
exactly with the Laplacian transformation or numerically with 
integration rules.

\subsection{Exact Computation of the Integrals}

To obtain analytical solutions of the differential equations:
\begin{eqnarray}
\label{equ1_app}
&&  \partial_t c_1 = A c_1 \\
&& \partial_t c_2 = A c_1 + B c_2 \\
&& \vdots \nonumber \\
&& \partial_t c_{i+1} = A c_{i+1} + B c_{i+1} 
\end{eqnarray}
where $c(t^n)$ is the initial condition and $A, B$ are unbounded 
operators.

We apply Laplacian transformation of the 
differential equations respecting the unbounded operators,
see \cite{dav78}. \\

We use the Laplace transformation for the translation
of the ordinary differential equation.
The transformations for this cases are given in \cite{dav78}. For that we need 
to define the transformed function $\hat u = \hat u(s,t)$:
\begin{equation}
\label{eq5}
  \hat u_i(s,t) := \int \limits_0^{\infty} u_i(x,t) \, e^{-s x} \,
  dx \,.
\end{equation}

We obtain the following analytical solution of the first iterative steps
with the re-transformation:
\begin{eqnarray}
\label{equ2_app}
&&  c_1 = \exp(A t) c(t^n) , \\
&&  c_2 = A (B - A)^{-1} \exp(A t) c(t^n)+ A ( A - B)^{-1} \exp(B t) c(t^n) .
\end{eqnarray}

The solutions of the next steps can be done recursively.

The Laplacian transformation is given as :
\begin{eqnarray}
\label{equ1_app}
&&  \tilde{c}_1 = (I s + A)^{-1} c_{01} \\
&& \tilde{c}_2 = (I s + B)^{-1} c_{02} + (I s + B)^{-1} A \tilde{c}_1  \\
&& \tilde{c}_3 = (I s + A)^{-1} c_{03} + (I s + A)^{-1} A \tilde{c}_2  \\
&& \ldots  \nonumber  .
\end{eqnarray}

Here we assume the commutation of

$(A-B)^{-1} A = A (A -B)^{-1}$,

and we can apply the decomposition of partial fraction:
\begin{eqnarray}
\label{equ1_app}
&&  (I s + A)^{-1} A (I s + B)^{-1} =  (I s + A)^{-1} (B-A)^{-1} A  \nonumber \\
&& + A (B - A)^{-1} (I s + B)^{-1} .
\end{eqnarray}

Here we can derive our solutions:
\begin{eqnarray}
\label{equ2_app}
&&  c_2 = \exp(B t) c(t^n) \\
&& + A (B - A)^{-1} \exp(A t) c(t^n) + A ( A - B)^{-1} \exp(B t) c(t^n) \nonumber .
\end{eqnarray}

We have the following recurrent argument for the Laplace-Transformation:

for the odd iterations: $i = 2m+1$

for $m= 0,1, 2, \ldots$
\begin{equation}
\begin{array}{c}
\tilde{c}_i = (I s - A)^{-1} \;  c_n +  (I s - A)^{-1} B \; \tilde{c}_{i-1} , 
\end{array} \label{laplace_1} 
\end{equation}

for the even iterations: $i = 2m$
for $m= 1, 2, \ldots$
\begin{equation}
\begin{array}{c}
 \tilde{c}_{i}(t) = (I s - B)^{-1} A \; \tilde{c}_{i-1}  + (I s - B)^{-1} \;  c_n ,
\end{array} \label{laplace_2}
\end{equation}

We develop the next iterative solution $c_3$ as follows:
\begin{eqnarray}
\label{equ2_app}
&&  c_3 = \exp(A t) c(t^n) \\
&& + B A (B - A)^{-1} t \exp(A t) c(t^n) \nonumber \\
&& + B A ( A - B)^{-1} (B - A)^{-1} \exp(A t) c(t^n)  \nonumber \\
&& + B A ( A - B)^{-1} (A - B)^{-1} \exp(B t) c(t^n) . \nonumber
\end{eqnarray}

We apply the iterative steps recursively and
 obtain for the odd iterative scheme the following
recurrent argument:
\begin{eqnarray}
\label{equ2_app}
&&  c_i = \exp(A t) c(t^n) \\
&& + B A (B - A)^{-1} t \exp(A t) c(t^n)  \nonumber \\
&& + \ldots +  \nonumber \\
&& +  B A \ldots B A (B - A)^{-1} \ldots (B - A)^{-1} t^{i-2} \exp(A t) c(t^n)  \nonumber \\
&& +  B A \ldots B A (B - A)^{-1} \ldots (B - A)^{-1} ( A - B)^{-1} (B - A)^{-1} \exp(A t) c(t^n)  \nonumber \\
&& +  \ldots +  B A \ldots B A (B - A)^{-1} \ldots (B - A)^{-1} (A - B)^{-1} \exp(B t) c(t^n) \nonumber .
\end{eqnarray}

\begin{remark}
The same recurrent argument can be applied to the even iterative scheme.
Here we have only to apply matrix multiplications and can skip the
time-consuming integral computations.
Only two evaluations for the exponential function for $A$ and $B$
are necessary. The main disadvantage of computing the 
iterative scheme exactly is the time-consuming inverse matrices. 
These can be skipped with numerical methods. 
\end{remark}

\subsection{Numerical Computation of the Integrals}

Here our main contributions 
are to skip the integral formulation of the exponential 
functions and to apply only matrix multiplication of given 
exponential functions. Such operators can be computed at the beginning
of the evaluation.

{\bf Evaluation with Trapezoidal rule (two iterative steps).}

We have to evaluate:
\begin{equation}
\begin{array}{c}
 c_2(t) = \exp(B t) c(t^n)  + \int_{t^n}^{t^{n+1}} \exp(B (t^{n+1} - s)) A c_1(s) ds , \quad t \in (t^n,t^{n+1}] , 
\end{array} \label{eq:err1}
\end{equation}
where $c_1(t) = \exp(A t) \exp(Bt) c(t^n)$.

We apply the Trapezoidal rule and obtain:
\begin{equation}
\begin{array}{c}
 c_2(t) = \exp(B t) c(t^n)  + \frac{1}{2} \Delta t \left( B \exp(A t) \exp(B t) + exp(A t) B  \right), 
\end{array} \label{rule1}
\end{equation}
where $c_1(t) = \exp(A t) \exp(Bt) c(t^n)$ and $\Delta t = t - t^n$.

{\bf Evaluation with Simpson rule (three iterative steps).}

We have to evaluate:
\begin{equation}
\begin{array}{c}
 c_3(t) = \exp(A t) c(t^n)  + \int_{t^n}^{t^{n+1}} \exp(A (t^{n+1} - s)) B c_2(s) ds , \quad t \in (t^n,t^{n+1}] , 
\end{array} \label{eq:err1}
\end{equation}
where $c_1(t) = \exp(A \frac{t}{2}) \exp(Bt) \exp(A \frac{t}{2}) c(t^n)$.

We apply the Simpson rule and obtain:
\begin{eqnarray}
\label{rule2}
 c_3(t) & = & \exp(A t) c(t^n)  + \frac{1}{6} \Delta t \left( B  \exp(A \frac{t}{2}) \exp(Bt) \exp(A \frac{t}{2}) \right. \\
& + & \left.  4  \exp(A \frac{t}{2})  B  exp(A \frac{t}{4})  \exp(B \frac{t}{2}) \exp(A \frac{t}{4}) +  \exp(A t) B \right), \nonumber
\end{eqnarray}
where $c_1(t) = \exp(A t) \exp(Bt) c(t^n)$ and $\Delta t = t - t^n$.

\begin{remark}
The same result can also be derived by applying BDF3 (Backward Differential Formula of Third Order).
\end{remark}

{\bf Evaluation with Bode rule (four iterative steps).}

We have to evaluate:
\begin{equation}
\begin{array}{c}
 c_4(t) = \exp(B t) c(t^n)  + \int_{t^n}^{t^{n+1}} \exp(B (t^{n+1} - s)) A c_3(s) ds , \quad t \in (t^n,t^{n+1}] , 
\end{array} \label{eq:err1}
\end{equation}
where $c_3(t)$ has to be evaluated with a third order method.

We apply the Bode rule and obtain:
\begin{eqnarray}
\label{rule3}
 c_4(t) & = & \exp(A t) c(t^n)  + \frac{1}{90} \Delta t \left( 7 A c_3(0) + 32 \exp(B \frac{t}{4}) A c_3( \frac{t}{4}) \right. \\
& + & \left.  12 \exp(B \frac{t}{2}) A c_3( \frac{t}{2}) + 32 \exp(B \frac{3 t}{4}) A c_3( \frac{3 t}{4})+ 7 \exp(B t) A c_3(t) \right), \nonumber
\end{eqnarray}
where $c_3(t)$ is evaluated with the Simpson rule or a further
third order method. We have $\Delta t = t - t^n$.

\begin{remark}
The same result can also be derived by applying the fourth order 
Gauss Runge Kutta method.
\end{remark}

In the next section we describe the numerical results of our methods.

\section{Numerical Examples}
\label{appl}

In the next example, we applied our iterative scheme with
their underlying numerical approximations to differential equations.

\subsection{Linear ordinary differential equation}

We deal with the linear ordinary differential equation:
\begin{eqnarray} 
\frac{\partial u(t)}{\partial t} =  \left(\begin{array}{cc}	-\lambda_1 & \lambda_2 \\	
                                   \lambda_1 & \lambda_2
               \end{array}\right) u ,
\end{eqnarray}
with initial condition $u_0 = (1,1)$ on the interval $[0,T]$.

The analytical solution is given by:
\begin{eqnarray}
u(t) =  \left(\begin{array}{cc}	c_1 - c_2\exp{(-(\lambda_1+\lambda_2)t)} \\	
                                  \frac{\lambda_1}{\lambda_2}c_1+c_2\exp{(-(\lambda_1+\lambda_2)t)}
               \end{array}\right) ,
\end{eqnarray}
where
\begin{eqnarray}
 c_1 = \frac{2}{1+\frac{\lambda_1}{\lambda_2}}\qquad,\qquad 
         c_2 = \frac{1-\frac{\lambda_1}{\lambda_2}}{1+\frac{\lambda_1}{\lambda_2}} .
\end{eqnarray}
We split our linear operator into two operators by setting:
\begin{eqnarray}
\frac{\partial u(t)}{\partial t} =  \left(\begin{array}{cc}	-\lambda_1 & 0 \\	\lambda_1 & 0\end{array}\right) u
       +\left(\begin{array}{cc}	0 & \lambda_2 \\	0 & -\lambda_2 \end{array}\right) u  .
\end{eqnarray}

We choose $\lambda_1 = 0.25$ and $\lambda_2 = 0.5$ on the interval [0,1].\\[2ex]
We therefor have the operators:
\begin{eqnarray} 
A = \left(\begin{array}{cc}	-0.25 & 0 \\	0.25 &0\end{array}\right) 
       \qquad,\qquad B = \left(\begin{array}{cc}	0 & 0.5 \\	0 & -0.5 \end{array}\right) .
\end{eqnarray}

For the integration method we use a time-step size of $h = 10^{-3}$.\\[2ex]
As initialization of our iterative method we use $c_{-1} \equiv 0$\\[2ex]
From the examples one can see that the order increases by one per 
iteration step.

In Tables \ref{table1}- \ref{table3} we apply the different integration rules
to our iterative scheme.
\begin{table}[h]
\begin{center}
\begin{tabular}{||c|c||c|c||}
\hline \hline
Iterative & Number of & $err_1$  & $err_2$  \\
Steps & splitting-partitions &   &   \\
\hline
   2 &    1 &  4.5321e-002 & 4.5321e-002 \\ 
   2 &   10 &  3.9664e-003 & 3.9664e-003 \\ 
   2 &  100 &  3.9204e-004 & 3.9204e-004 \\ \hline
   3 &    1 &  7.6766e-003 & 7.6766e-003 \\ 
   3 &   10 &  6.6383e-005 & 6.6383e-005 \\ 
   3 &  100 &  6.5139e-007 & 6.5139e-007 \\ \hline
   4 &    1 &  4.6126e-004 & 4.6126e-004 \\ 
   4 &   10 &  4.1883e-007 & 4.1883e-007 \\ 
   4 &  100 &  5.9520e-009 & 5.9521e-009 \\ \hline
   5 &    1 &  4.6828e-005 & 4.6828e-005 \\ 
   5 &   10 &  1.3954e-009 & 1.3953e-009 \\ 
   5 &  100 &  5.5352e-009 & 5.5351e-009 \\ \hline
   6 &    1 &  1.9096e-006 & 1.9096e-006 \\ 
   6 &   10 &  5.5527e-009 & 5.5528e-009 \\ 
   6 &  100 &  5.5355e-009 & 5.5356e-009 \\ \hline \hline
\end{tabular}
\caption{\label{table1} Numerical results for the first example
with the iterative splitting method and the second-order Trapezoidal rule.}
\end{center}   
\end{table} 

\begin{table}[h]
\begin{center}
\begin{tabular}{||c|c||c|c||}
\hline \hline
Iterative & Number of & $err_1$  & $err_2$  \\
Steps & splitting-partitions &   &   \\
\hline
   2 &    1  & 4.5321e-002 & 4.5321e-002 \\ 
   2 &   10  & 3.9664e-003 & 3.9664e-003 \\ 
   2 &  100  & 3.9204e-004 & 3.9204e-004 \\ \hline
   3 &    1  & 7.6766e-003 & 7.6766e-003 \\ 
   3 &   10  & 6.6385e-005 & 6.6385e-005 \\ 
   3 &  100  & 6.5312e-007 & 6.5312e-007 \\ \hline
   4 &    1  & 4.6126e-004 & 4.6126e-004 \\ 
   4 &   10  & 4.1334e-007 & 4.1334e-007 \\ 
   4 &  100  & 1.7864e-009 & 1.7863e-009 \\ \hline
   5 &    1  & 4.6833e-005 & 4.6833e-005 \\ 
   5 &   10  & 4.0122e-009 & 4.0122e-009 \\ 
   5 &  100  & 1.3737e-009 & 1.3737e-009 \\ \hline
   6 &    1  & 1.9040e-006 & 1.9040e-006 \\ 
   6 &   10  & 1.4350e-010 & 1.4336e-010 \\ 
   6 &  100  & 1.3742e-009 & 1.3741e-009 \\ \hline\hline
\end{tabular}
\caption{\label{table2} Numerical results for the first example
with the iterative splitting method and third order BDF3.}
\end{center}   
\end{table} 

\begin{table}[h]
\begin{center}
\begin{tabular}{||c|c||c|c||}
\hline \hline
Iterative & Number of & $err_1$  & $err_2$  \\
Steps & splitting-partitions &   &   \\
\hline
   2 &    1  & 4.5321e-002 & 4.5321e-002 \\ 
   2 &   10  & 3.9664e-003 & 3.9664e-003 \\ 
   2 &  100  & 3.9204e-004 & 3.9204e-004 \\ \hline
   3 &    1  & 7.6766e-003 & 7.6766e-003 \\ 
   3 &   10  & 6.6385e-005 & 6.6385e-005 \\ 
   3 &  100  & 6.5369e-007 & 6.5369e-007 \\ \hline
   4 &    1  & 4.6126e-004 & 4.6126e-004 \\ 
   4 &   10  & 4.1321e-007 & 4.1321e-007 \\ 
   4 &  100  & 4.0839e-010 & 4.0839e-010 \\ \hline
   5 &    1  & 4.6833e-005 & 4.6833e-005 \\ 
   5 &   10  & 4.1382e-009 & 4.1382e-009 \\ 
   5 &  100  & 4.0878e-013 & 4.0856e-013 \\ \hline
   6 &    1  & 1.9040e-006 & 1.9040e-006 \\ 
   6 &   10  & 1.7200e-011 & 1.7200e-011 \\ 
   6 &  100  & 2.4425e-015 & 1.1102e-016 \\ \hline\hline
\end{tabular}
\caption{\label{table3} Numerical results for the first example
with the iterative splitting method and fourth order Gau{\ss} RK.}
\end{center}   
\end{table}

\begin{figure}[h]
\begin{center}
\includegraphics[width=8cm]{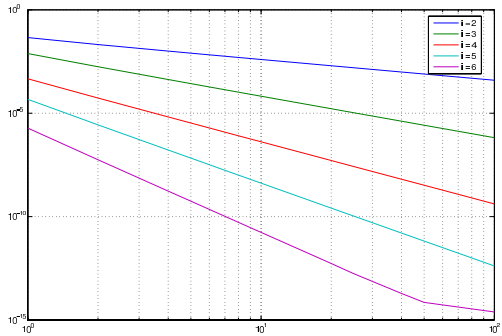}
\end{center}
\caption{\label{fig1} Convergence rates from two to six iterations.}
\end{figure}

\begin{remark}
Here we see the benefit of higher quadrature rules
in combination with the iterative operator splitting scheme, 
see Figure \ref{fig1}.
We obtain the best result with a fourth order Gauss Runge-Kutta method.
Such improved quadrature rules and the expansion of the 
integral formulation show that our method has considerable 
computational benefits.  
\end{remark}

In the next example we deal with a Schr\"odinger equations.

\subsection{Radial Sch\"odinger equation (highly nonlinear)}

We consider the radial Schr\"odinger equation\\
\begin{eqnarray}
  \frac{\partial^2 u}{\partial r^2} = f(r, E) u(r)
  \la{har}
\end{eqnarray}
where
\begin{eqnarray}
  f(r, E) = 2 V(r) - 2 E + \frac{l(l + 1)}{r^2} \; ,
\end{eqnarray}
If we re-label $r\rightarrow t$ and $u(r)\rightarrow q(t)$, (\ref{har}) can be viewed as
harmonic oscillator with a time-dependent spring constant
\be
k(t,E)=-f(t,E)
\ee
and  Hamiltonian
\begin{eqnarray}
H=\frac12 p^2+\frac12 k(t,E)q^2.
\end{eqnarray}

We compare different splitting methods with our scheme, which is 
related to a Suzuki's expansion, see \cite{suzu93}.

In Figure \ref{part_1}, we present the comparison between
fractional step (FR), Runge-Kutta Nystr\"om (RKN), standard
Magnus expansion (M), improved Magnus expansion (BM) and
Suzuki's expansion (C).
\begin{figure}[ht]
\begin{center} 
\includegraphics[width=10.0cm,angle=-0]{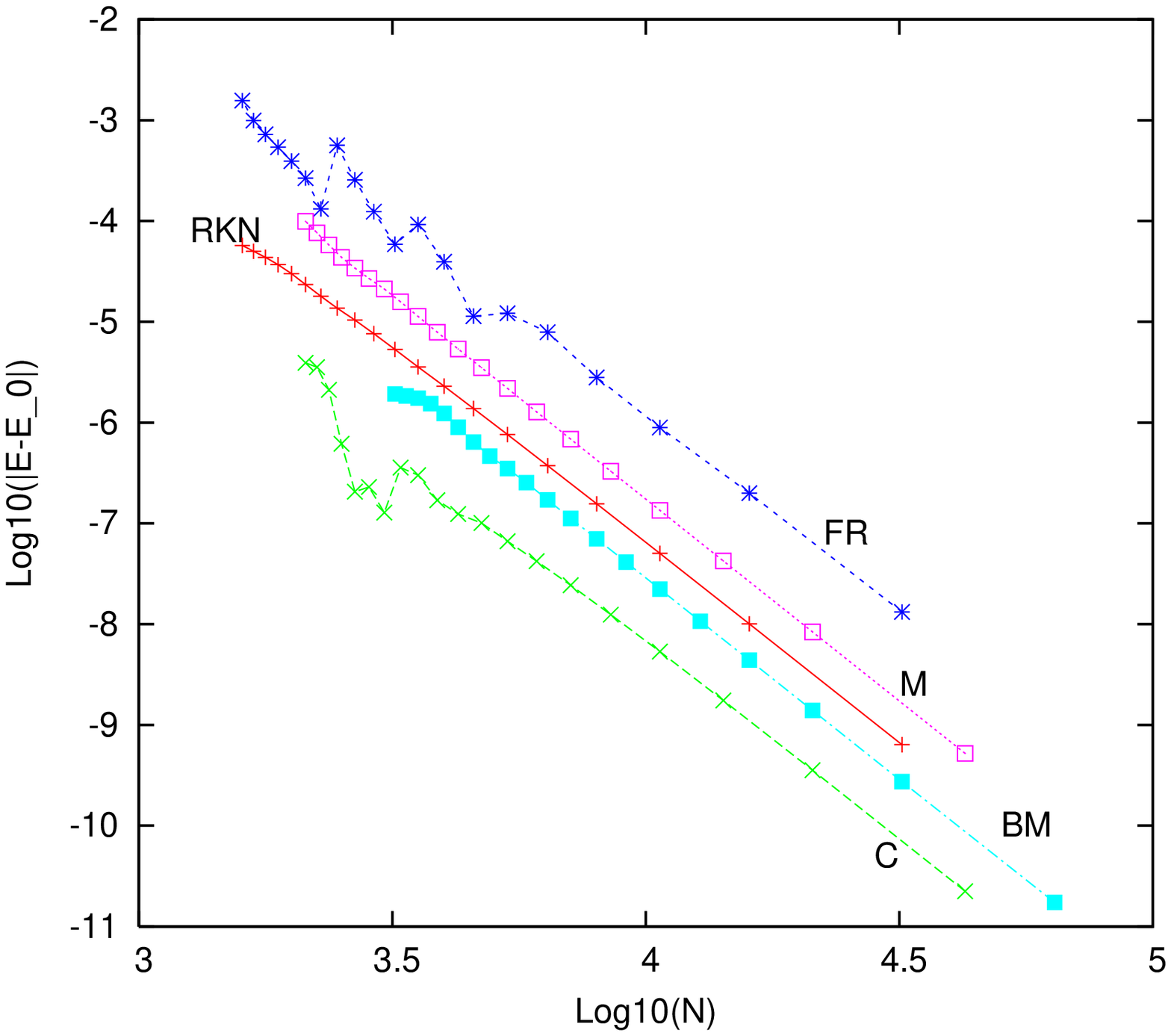} 
\end{center}
\caption{\label{part_1} Comparison between different operator-splitting method.}
\end{figure}

Here we see the benefit of the iterative 
operator-splitting method, which can be seen as a modified Suzuki's expansion
method. 

\begin{remark}
The benefit of higher quadrature rules in combination 
with the iterative operator splitting scheme
is related to Suzuki's expansion. We applied our scheme and 
obtain the best result with a fourth order method.
Such improvements based on quadrature rules, expansion of integral
formulations show that our method has considerable computational benefits.  
\end{remark}

\section{Conclusions and Discussions }
\label{conc}

We have presented an iterative operator-splitting method as
competitive method to compute split-able differential equations.
On the basis of integral formulation of the iterative 
scheme, we analyze the assumptions of the method and
its local error for unbounded operators.
Under weak assumptions we can prove the higher-order error estimates.
Numerical examples confirm the method's application to 
differential equations and to complicated Schr\"odinger equations.
In the future we will focus on the development of improved
operator-splitting methods with respect to their application
in nonlinear differential equations.

\bibliographystyle{plain}

\end{document}